\newcommand{\R}{\mathbb R}
\newcommand{\Z}{\mathbb Z}
\newcommand{\sgn}{\text{sgn}}
\newcommand{\trinorm}{|\!|\!|}
\newtheorem{theorem}{Theorem}[section]
\newtheorem{remark}[theorem]{Remark}
\newtheorem{lemma}[theorem]{Lemma}
\newtheorem{corollary}[theorem]{Corollary}
\begin{document}

\vglue-1cm \hskip1cm
\title[Dispersion Generalized Benjamin-Ono Equation]{Global well-posedness for the $k$-dispersion generalized Benjamin-Ono equation}

\author[L. G. Farah]{Luiz G. Farah}
\address{ICEx, Universidade Federal de Minas Gerais, Av. Ant\^onio Carlos, 6627, Caixa Postal 702, 30123-970,
Belo Horizonte-MG, Brazil}
\email{lgfarah@gmail.com}

\author[F. Linares]{Felipe Linares}
\address{IMPA, Estrada Dona Castorina 110, 22460-320, Rio de Janeiro-RJ,
 Brazil.}
\email{linares@impa.br}

\author[A. Pastor]{Ademir Pastor}
\address{IMECC-UNICAMP, Rua S\'ergio Buarque de Holanda, 651, 13083-859, Cam\-pi\-nas-SP, Bra\-zil}
\email{apastor@ime.unicamp.br}

\subjclass[2010]{Primary 35Q53 ; Secondary 35B40, 35B60}

\date{23/10/2012}

\keywords{Local and global well-posedness}

\begin{abstract}
 We consider the $k$-dispersion generalized Benjamin-Ono equation in the supercritical case. We establish sharp
 conditions on the data to show global well-posedness in the energy space for this family of nonlinear dispersive equations. We also
 prove similar results for the generalized Benjamin-Ono equation.
 \end{abstract}

\maketitle


\maketitle

\section{Introduction}

In this paper we continue the study on sharp conditions to obtain
global solutions for supercritical nonlinear dispersive models. With this aim
we consider the  initial value problem (IVP)
for the  $k$-dispersion generalized Benjamin-Ono (DGBO) equation
\begin{equation}\label{DGBO}
\begin{cases}
\partial_t u -   D^{\beta}\partial_x u +\partial_x (u^{k+1}) = 0, \quad x\in \R, \;t>0,\;\; 1\leq \beta <2,\;\;k\in\Z^{+},\\
u(x,0) = u_0(x),
\end{cases}
\end{equation}
where  $D^{\beta}$ denotes  the homogeneous derivative of order $\beta \in\R$,
$$
D^{\beta} f=c_{\beta}\big(|\xi|^{\beta}\widehat{f}\,\big)^{\vee}=(\mathcal H\,\partial_x)^{\beta}f,
$$
and  $\mathcal H$ denotes the Hilbert transform,
\begin{equation*}
\mathcal H f(x)=(-i\,\sgn(\xi) \widehat{f}(\xi))^{\vee}(x).
\end{equation*}
These equations arise as mathematical models for the unidirectional propagation of weakly nonlinear dispersive long waves.
Notice that when $\beta=2$ and $k=1$ the equation in \eqref{DGBO} is the well known Korteweg-de Vries (KdV) equation and
when $\beta=1$ and $k=1$ it is the Benjamin-Ono (BO) equation. (See \cite{KdV}, \cite{Be} and \cite{On} for their derivation and
\cite{KePoVe2}, \cite{CKSTT}, \cite{IoKe} for the sharpest local and global well-posedness results). 

We shall mention that recently several developments regarding the local well-posedness has been obtained for both BO and DGBO equations. 
One of the interesting features of the IVP associated to these families of equations is the obstruction to its solvability by iteration methods. Indeed, Molinet, Saut and Tzvetkov in \cite{MoSaTz} proved that the contraction mapping principle cannot be applied to solve these equations in the standard Sobolev spaces. For more details concerning well-posedness  we refer to  \cite {KePoVe}, \cite{CoKeSt}, \cite{MoRi2},  \cite{He},
\cite{Gu2}, \cite{HeIoKeKo}, \cite{FLP1}, \cite{FLP2},  \cite{MoRi} for
 the GDBO equation  and  \cite{BuPl}, \cite{FP}, \cite{FLP3}, \cite{Io}, \cite{IoKe}, \cite{KeKo}, \cite{KoTz},  \cite{MoPi},
\cite{Po}, \cite{Sa}, \cite{Ta} for the BO equation.

Regarding local well-posedness for the IVP \eqref{DGBO}, $k\ge2$, in \cite{KePoVe}  Kenig, Ponce and Vega 
showed that the IVP \eqref{DGBO} is locally well-posed in $H^s(\R)$,  for $s\ge(9-3\beta)/4$ and globally well-posed
in $H^{\beta/2}(\R)$ for $\beta\ge9/5$.

Recently, Kenig, Martel and Robiano \cite{KMR} considered the family of dispersion generalized Benjamin-Ono equations,
\begin{equation}\label{cri-dgbo}
\partial_t u-D^{\beta}\partial_x u+|u|^{2\beta}\partial_xu=0,
\end{equation}
which are critical with respect to both the $L^2$ norm and the global existence of solutions. Observe that these equations are the
interpolated ones between the modified BO equation ($\beta=1$) and the critical KdV equation $\beta=2$.  In \cite{KMR}
it was proved among other results, the local well-posedness in the energy space $H^{\beta/2}(\R)$ for $\beta\in (1,2)$.
They also studied the blow-up problem in the same vein as in \cite{MM} and \cite{Mer}. In particular they established the finite time
blow-up of solutions with negative energy in the energy space when $\beta$ is close to 2.

Here, we are interested in global well-posedness for the DGBO equation in the supercritical case, that is, $k>2\beta$. Important relations in this study are the conserved quantities satisfied by  real solutions of the IVP \eqref{DGBO} 
\begin{equation}\label{MCd}
M(u(t))=\int_\mathbb{R} u^2(x,t)\, dx
\end{equation}
and
\begin{equation}\label{ECd}
E(u(t))=\dfrac{1}{2}\int_{\mathbb{R}}|D^{\beta/2} u(x,t)|^2\;dx
-\dfrac{1}{k+2}\int_{\mathbb{R}} u^{k+2}(x,t)\;dx.
\end{equation}
These quantities allow us to establish an {\it a priori} estimate in the energy space $H^{\beta/2}(\R)$.

Next we will describe our main results concerning  the case $\beta\in[1,2)$. 
First, we recall that in \cite{farah-lp} the authors studied  the IVP associated with the $L^2$-supercritical
generalized KdV equation ($\beta=2$ in equation \eqref{DGBO}). Under sharp conditions satisfied by the initial data they established global well-posedness in the energy space $H^1(\R)$.  
This result is the first step in view to the more general theory studied by Kenig and Merle \cite{KM}  and by 
Holmer and Roudenko \cite{HR}.  Our study below is based on these works.

One of the main tools needed for this analysis is having the best constant for the Gagliardo-Nirenberg inequality
\begin{equation*}
\|f\|_{L^{k+2}}^{k+2}\le K_{\rm opt}^{k+2}\,\|D^{\beta/2}
f\|_{L^2}^{k/\beta}\|f\|_{L^2}^{(2+(k+2)(\beta-1))/\beta},
\end{equation*}
where the constant $K_{\rm opt}$  depends on  $Q$ the unique positive, even, decreasing  (for $x>0$) solution of the equation
\begin{equation}\label{groundA}
D^\beta Q+Q-Q^{k+1}=0.
\end{equation}
The sharp constant was obtained in \cite{ABLS}. In \cite{We} the existence of the ground state $Q$ was showed. The uniqueness of $Q$ 
which is also key in our arguments was recently established by Frank and Lenzmann in \cite{FL}.

Our main result reads as follows.

\begin{theorem}\label{global_dgbo}
 Let $\beta \in [1,2)$ and $u_0\in H^{\beta/2}(\R)$. Assume  $k>2\beta$ and let  $s_k=1/2-\beta/k$. Suppose that
\begin{equation}\label{GR1d}
E(u_0)^{s_k} M(u_0)^{\beta/2-s_k} < E(Q)^{s_k} M(Q)^{\beta/2-s_k} , \,\,\,
E(u_0) \geq 0.
\end{equation}
If
\begin{equation}\label{GR2d}
\|D^{\beta/2} u_0\|_{L^2}^{s_k}\|u_0\|_{L^2}^{\beta/2-s_k} < \|D^{\beta/2}
Q\|_{L^2}^{s_k}\|Q\|_{L^2}^{\beta/2-s_k},
\end{equation}
then for any $t$ as long as the solution exists,
\begin{equation}\label{GR3d}
\|D^{\beta/2} u(t)\|_{L^2}^{s_k}\|u_0\|_{L^2}^{\beta/2-s_k}=\|D^{\beta/2}
u(t)\|_{L^2}^{s_k}\|u(t)\|_{L^2}^{\beta/2-s_k} <\|D^{1/2}
Q\|_{L^2}^{s_k}\|Q\|_{L^2}^{\beta/2-s_k},
\end{equation}
where $Q$ is the unique positive, even, decreasing (for $x>0$)   solution of
\eqref{groundA}.
\end{theorem}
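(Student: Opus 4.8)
The plan is to follow the variational scheme of Kenig--Merle \cite{KM} and Holmer--Roudenko \cite{HR}, adapted to the generalized KdV equation in \cite{farah-lp}: I convert the conserved energy into a one--variable inequality via the sharp Gagliardo--Nirenberg inequality, identify the ground state $Q$ with the maximizer of the resulting function, and then run a continuity argument in $t$, using that $t\mapsto\|D^{\beta/2}u(t)\|_{L^2}$ is continuous by the local theory. Throughout I write $y(t)=\|D^{\beta/2}u(t)\|_{L^2}$ and use that $M(u(t))=M(u_0)=:M$ and $E(u(t))=E(u_0)$ are conserved. The exponent $s_k$ and the particular products appearing in \eqref{GR1d}--\eqref{GR3d} are exactly the scaling--invariant combinations for \eqref{DGBO}, which is what makes the argument dimensionally consistent.

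First I would record the variational identities for $Q$. Pairing \eqref{groundA} with $Q$ gives $\|D^{\beta/2}Q\|_{L^2}^2+\|Q\|_{L^2}^2=\|Q\|_{L^{k+2}}^{k+2}$, while differentiating $\lambda\mapsto\tfrac12\lambda^{\beta-1}\|D^{\beta/2}Q\|_{L^2}^2+\tfrac12\lambda^{-1}\|Q\|_{L^2}^2-\tfrac1{k+2}\lambda^{-1}\|Q\|_{L^{k+2}}^{k+2}$ at $\lambda=1$ gives the Pohozaev relation $\tfrac{\beta-1}{2}\|D^{\beta/2}Q\|_{L^2}^2-\tfrac12\|Q\|_{L^2}^2+\tfrac1{k+2}\|Q\|_{L^{k+2}}^{k+2}=0$. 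Solving these two linear relations yields
\begin{equation*}
\|Q\|_{L^2}^2=\frac{2+(k+2)(\beta-1)}{k}\|D^{\beta/2}Q\|_{L^2}^2,\qquad \|Q\|_{L^{k+2}}^{k+2}=\frac{(k+2)\beta}{k}\|D^{\beta/2}Q\|_{L^2}^2,
\end{equation*}
and hence the crucial identity $E(Q)=\bigl(\tfrac12-\tfrac\beta k\bigr)\|D^{\beta/2}Q\|_{L^2}^2=s_k\|D^{\beta/2}Q\|_{L^2}^2$. Since $Q$ saturates Gagliardo--Nirenberg, these same relations express $K_{\rm opt}$ in terms of $\|D^{\beta/2}Q\|_{L^2}^2$.

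Next I would insert Gagliardo--Nirenberg into the conserved energy to get, for every $t$,
\begin{equation*}
E(u_0)\ge\tfrac12 y(t)^2-c_M\,y(t)^{k/\beta}=:f\bigl(y(t)\bigr),\qquad c_M:=\frac{K_{\rm opt}^{k+2}}{k+2}M^{(2+(k+2)(\beta-1))/(2\beta)}.
\end{equation*}
As $k>2\beta$ we have $k/\beta>2$, so $f$ rises from $0$, attains a unique maximum at $y_\ast=(\beta/(c_Mk))^{\beta/(k-2\beta)}$, and then decreases; using $c_My_\ast^{k/\beta-2}=\beta/k$ one finds $f(y_\ast)=s_k y_\ast^2$, mirroring the identity for $Q$. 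The key algebraic step is then the scaling identity
\begin{equation*}
y_\ast^{2s_k}M^{\beta/2-s_k}=\|D^{\beta/2}Q\|_{L^2}^{2s_k}\|Q\|_{L^2}^{2(\beta/2-s_k)},
\end{equation*}
which follows by substituting the formula for $K_{\rm opt}$ into $y_\ast$; all powers of $M$ cancel precisely because $s_k=\tfrac12-\tfrac\beta k$ is the scaling--critical index. Raising this to the power $s_k$ and using $E(Q)=s_k\|D^{\beta/2}Q\|_{L^2}^2$ shows that \eqref{GR1d} is equivalent to $E(u_0)<f(y_\ast)$; taking instead square roots shows that \eqref{GR2d} is equivalent to $y(0)<y_\ast$ and the desired conclusion \eqref{GR3d} to $y(t)<y_\ast$.

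Finally I would close with a continuity argument. Writing $\xi(t)=y(t)/y_\ast$ and $h(\xi)=\tfrac12\xi^2-\tfrac\beta k\xi^{k/\beta}$, the energy bound becomes $h(\xi(t))\le E(u_0)/y_\ast^2=:\ell$, and \eqref{GR1d} forces $\ell<s_k=h(1)$. Since $h$ increases strictly on $[0,1]$ up to its maximum $h(1)=s_k$ and decreases strictly afterwards, the sublevel set $\{\xi\ge0:h(\xi)\le\ell\}$ is the disjoint union $[0,\xi_-]\cup[\xi_+,\infty)$ with $\xi_-<1<\xi_+$, and the gap $(\xi_-,\xi_+)$ is forbidden for $\xi(t)$. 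Because $t\mapsto\xi(t)$ is continuous (as $u\in C([0,T);H^{\beta/2}(\R))$ from the local theory) and starts in the left component (as $\xi(0)<1$ by \eqref{GR2d} while $\xi_+>1$), it cannot cross the gap; hence $\xi(t)\le\xi_-<1$ throughout the existence interval, which is \eqref{GR3d} (and, since $M$ is conserved, gives a uniform $H^{\beta/2}$ bound). I expect the main obstacle to be the exponent bookkeeping in the scaling identity above: it is exactly there that the uniqueness of $Q$ \cite{FL} and the value of the sharp constant \cite{ABLS} are indispensable, since without them $f(y_\ast)$ could not be identified with the threshold written in terms of $Q$.
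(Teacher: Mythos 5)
Your proposal is correct and follows essentially the same route as the paper: conservation of $M$ and $E$ combined with the sharp Gagliardo--Nirenberg inequality gives a one-variable inequality $f(y(t))\le E(u_0)$, the Pohozaev-type identities for $Q$ identify the maximum point and maximum value of $f$ with the threshold quantities in \eqref{GR1d}--\eqref{GR2d}, and a continuity argument confines $y(t)$ below the maximizer. The differences are only cosmetic: you work with $\|D^{\beta/2}u(t)\|_{L^2}$ rather than its square, derive the Pohozaev relation by differentiating the scaled action rather than pairing the equation with $x\partial_x Q$, and spell out the forbidden-gap continuity step in more detail than the paper does.
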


\begin{corollary} \label{cor2}
Let $\beta \in (1,2)$. Under the hypotheses of Theorem \ref{global_dgbo} the local solutions  given in Theorem \ref{localdbo} below can be 
extended to any interval of time $[0,T]$.
\end{corollary}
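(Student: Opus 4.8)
The plan is to deduce Corollary \ref{cor2} from the \emph{a priori} bound \eqref{GR3d} already established in Theorem \ref{global_dgbo}, combined with the fact that the local existence time furnished by Theorem \ref{localdbo} depends only on the $H^{\beta/2}$-norm of the datum. The key point is that \eqref{GR3d} prevents the energy-space norm of the solution from escaping to infinity, so a standard continuation argument iterated with \emph{uniform} time steps covers any prescribed interval $[0,T]$; since $T$ is arbitrary, the solution is global.

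First I would extract a uniform bound on the homogeneous Sobolev part of the norm. By conservation of mass \eqref{MCd} one has $\|u(t)\|_{L^2}=\|u_0\|_{L^2}$ throughout the maximal interval of existence. Since $k>2\beta$ forces $s_k=1/2-\beta/k>0$, and the right-hand side of \eqref{GR3d} is a constant determined solely by $Q$, dividing \eqref{GR3d} by $\|u_0\|_{L^2}^{\beta/2-s_k}$ and raising to the power $1/s_k$ yields
\begin{equation*}
\|D^{\beta/2}u(t)\|_{L^2} < C_0 := \left(\frac{\|D^{1/2}Q\|_{L^2}^{s_k}\|Q\|_{L^2}^{\beta/2-s_k}}{\|u_0\|_{L^2}^{\beta/2-s_k}}\right)^{1/s_k},
\end{equation*}
a constant depending only on $u_0$ and $Q$. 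Together with mass conservation this gives
\begin{equation*}
\|u(t)\|_{H^{\beta/2}}^2 = \|u(t)\|_{L^2}^2 + \|D^{\beta/2}u(t)\|_{L^2}^2 < \|u_0\|_{L^2}^2 + C_0^2 =: M_0^2,
\end{equation*}
so the full energy-space norm stays below the fixed constant $M_0$ for as long as the solution exists.

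With this uniform bound in hand I would run the standard continuation scheme. By Theorem \ref{localdbo} there is a time $\delta=\delta(M_0)>0$, depending only on $M_0$, such that any datum of $H^{\beta/2}$-norm at most $M_0$ generates a solution on $[0,\delta]$. Starting from $u_0$, one solves on $[0,\delta]$; since $\|u(\delta)\|_{H^{\beta/2}}<M_0$, one may restart with $u(\delta)$ as initial datum and extend the solution to $[\delta,2\delta]$, and so on. Because $E$ and $M$ are conserved, the scale-invariant inequalities \eqref{GR1d}--\eqref{GR2d} persist at each restart, so the trapping conclusion \eqref{GR3d}, and hence the uniform bound $\|u(t)\|_{H^{\beta/2}}<M_0$, remain valid at every stage of the iteration. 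As each step has the \emph{same} length $\delta$, after $\lceil T/\delta\rceil$ steps the solution reaches time $T$; since $T>0$ is arbitrary, the solution is global.

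The main obstacle is the uniform-in-time character of the local existence time. Because the contraction principle is unavailable for these equations (as recalled in the Introduction following \cite{MoSaTz}), one must know that the existence time in Theorem \ref{localdbo} is governed solely by $\|u_0\|_{H^{\beta/2}}$, rather than by a finer norm or by the profile of the datum. Granting this property—precisely what the energy-method local theory supplies—the iteration closes with uniform steps and no loss. The restriction to $\beta\in(1,2)$, in place of $\beta\in[1,2)$, simply reflects the range in which Theorem \ref{localdbo} is stated, the endpoint $\beta=1$ (the Benjamin-Ono case) being treated separately.
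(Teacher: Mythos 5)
Your proposal is correct and is exactly the argument the paper intends (the paper leaves the proof of Corollary \ref{cor2} implicit): mass conservation plus the trapping bound \eqref{GR3d} give a uniform \emph{a priori} $H^{\beta/2}$ bound, and since Theorem \ref{localdbo} provides an existence time depending only on $\|u_0\|_{H^{\beta/2}}$, the standard continuation with uniform time steps extends the solution to any $[0,T]$. You also correctly identify the two points on which the argument hinges --- the persistence of \eqref{GR1d}--\eqref{GR2d} under the conserved quantities at each restart, and the norm-only dependence $T=T(\|u_0\|_{H^{\beta/2}})$, which is what distinguishes the range $\beta\in(1,2)$ here from the $\beta=1$ case treated separately.
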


Next we specialize ourselves in the case $\beta=1$, thus the equation in \eqref{DGBO} becomes the generalized BO equation for $k\ge 2$
and the energy space is $H^{1/2}(\R)$.  A scaling argument suggests that the best local  well-posedness for the IVP \eqref{DGBO} should be attained for $s>s_k=1/2-1/k$. We review the state of the art concerning this IVP. In the case $k=2$, Kenig and Takaoka \cite{KeTa} established local well-posedness for data in $H^{1/2}(\R)$ and, using the conserved quantities, global well-posedness in $H^{s}(\R)$, $s\ge 1/2$. For $k=3$ and $k\ge 4$, local-wellposedness 
was obtained  by Vento in \cite{V1} in $H^s(\R)$, $s>1/3$ and $s\ge s_k$, respectively. We shall observe that in these results the existence time depends on the initial data itself and not only on its norm, that is, $T=T(u_0)$. Therefore, we cannot use the conserved quantities to extend the local solutions to global ones by means of {\it a priori} estimates. However, if  $k\ge 5$, the local theory was proved in $H^{s}(\R)$, $s\ge 1/2$  with $T=T(\|u_0\|_{s})>0$ in \cite{MoRi}. Using these local solutions 
we can extend them globally as stated in the next result.
%
%

\begin{corollary}
Assume $k\geq5$ and the hypotheses of Theorem \ref{global_dgbo} for $\beta=1$. Then the local $H^{1/2}$-solution of the generalized Benjamin-Ono
(gBO) equation
\begin{equation}\label{genbo}
\left\{
\begin{array}{lll}
{\displaystyle u_t-\mathcal{H}\partial_x^2 u+ \partial_x(u^{k+1})  =  0,  }  \qquad x \in \mathbb{R}, \,\,\,\, t>0, \\
{\displaystyle  u(x,0)=u_0(x)},
\end{array}
\right.
\end{equation}
can be extended to any interval of time $[0,T]$.
\end{corollary}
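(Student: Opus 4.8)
The plan is to promote the local $H^{1/2}$-solutions to global ones by iterating the local theory with a fixed time step, the size of the step being controlled by the uniform \emph{a priori} bound coming from Theorem \ref{global_dgbo}.

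First I would specialize Theorem \ref{global_dgbo} to $\beta=1$, so that the energy space is $H^{1/2}(\R)$, $D^{\beta/2}=D^{1/2}$, and $s_k=1/2-1/k$. Under the hypotheses \eqref{GR1d}--\eqref{GR2d}, the conclusion \eqref{GR3d} gives, for every $t$ in the maximal interval of existence,
\begin{equation*}
\|D^{1/2}u(t)\|_{L^2}^{s_k}\|u(t)\|_{L^2}^{1/2-s_k}<\|D^{1/2}Q\|_{L^2}^{s_k}\|Q\|_{L^2}^{1/2-s_k}.
\end{equation*}
Because the mass \eqref{MCd} is conserved, $\|u(t)\|_{L^2}=\|u_0\|_{L^2}$, this inequality bounds $\|D^{1/2}u(t)\|_{L^2}$ by a constant depending only on $u_0$ and $Q$; together with mass conservation it yields a uniform bound
\begin{equation*}
\sup_{t}\|u(t)\|_{H^{1/2}}\le C(u_0)<\infty
\end{equation*}
over the whole interval of existence.

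Next I would invoke the local well-posedness of Molinet and Ribaud \cite{MoRi}, valid for $k\ge5$ in $H^s(\R)$, $s\ge1/2$, whose decisive feature is that the existence time $T=T(\|u_0\|_{s})>0$ depends on the data only through its norm. Fixing an arbitrary $T>0$, let $\tau=\tau(C(u_0))>0$ be the time step furnished by this local theory for data of $H^{1/2}$-norm at most $C(u_0)$. Solving on $[0,\tau]$ and restarting from $u(\tau)$, which again satisfies $\|u(\tau)\|_{H^{1/2}}\le C(u_0)$ by the uniform bound, one may reapply the local theory with the same step $\tau$. Each iteration advances the solution by the fixed amount $\tau$, so after finitely many steps the solution is defined on all of $[0,T]$; since $T$ is arbitrary, the solution is global.

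The essential point---and the reason for restricting to $k\ge5$---is the interplay between the uniform bound and the norm-dependence of the local existence time. The estimate from Theorem \ref{global_dgbo} prevents the $H^{1/2}$-norm from blowing up, but this is useful for continuation only because the local time step can be chosen uniformly once the norm is controlled. For $2\le k\le4$ the best available local theory has $T=T(u_0)$ depending on the full profile of the data rather than on its norm alone, so a uniform bound does not produce a uniform time step and the iteration breaks down; this obstruction, noted in the discussion preceding the statement, is exactly what forces the hypothesis $k\ge5$.
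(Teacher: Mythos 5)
Your proposal is correct and is exactly the argument the paper intends: the \emph{a priori} bound \eqref{GR3d} from Theorem \ref{global_dgbo} with $\beta=1$, combined with mass conservation, yields a uniform $H^{1/2}$ bound, which feeds into the Molinet--Ribaud local theory (whose existence time depends only on $\|u_0\|_{H^{1/2}}$ for $k\ge5$) to iterate with a fixed time step. Your closing observation about why $k\ge5$ is essential matches the paper's own remark that for $k=3,4$ the available local existence time depends on the data profile rather than its norm, so the continuation argument breaks down.
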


\begin{remark} We observe that in the cases $k= 3,\,4$  it is an open problem to establish a local theory in the
energy space with time $T=T(\|u_0\|_{1/2})$.  With this result available Theorem \ref{global_dgbo}  will imply in
the existence of global solutions with the sharp conditions on the data.
\end{remark}

The paper is organized as follows: In Section 2 the local theory for the IVP \eqref{DGBO}, $\beta\in(1,2)$ and $k>2\beta$ will be established. Then, in Section 3, the proof of Theorem \ref{global_dgbo} will be given.

\section{Local well-posedness in the energy space}

Here we consider the Cauchy problem associated with the supercritical
dispersion generalized Benjamin-Ono (DGBO) equation
\begin{equation}\label{dBO}
\left\{
\begin{array}{lll}
{\displaystyle u_t-D^\beta\partial_x u+ \partial_x(u^{k+1})  =  0,  }  \qquad x \in \mathbb{R}, \,\,\,\, t>0, \\
{\displaystyle  u(x,0)=u_0(x)},
\end{array}
\right.
\end{equation}
where $u$ is a real-valued function, $1<\beta<2$ and $k>2\beta$ is an integer
number.

First, consider the linear IVP
\begin{equation}\label{a1}
     \left\{
\begin{array}{lll}
{\displaystyle u_t-D^\beta\partial_x u=  0,}  \qquad x \in \mathbb{R}, \,\,\,\, t \in \mathbb{R}, \\
{\displaystyle  u(x,0)=u_0(x)}.
\end{array}
\right.
\end{equation}
The solution of \eqref{a1} is given by the unitary group
$\{U_\beta(t)\}_{t=-\infty}^\infty$ such that
\begin{equation}\label{a2}
u(t)=U_\beta(t)u_0(x)= \int_{\mathbb{R}}
e^{i(t|\xi|^\beta\xi+x\xi)}\widehat{u}_0(\xi) d\xi.\\
\end{equation}

We start by recalling the following estimates proved in \cite{KMR}.

\begin{lemma}\label{lemmalinear}
Assume $1<\beta<2$. For $0<T<1$, there exists a constant $C>0$ such that
\begin{itemize}
    \item[(i)] For all $u_0\in L^2$, $$\|U_\beta(t)u_0\|_{L^\infty_TL^2_x}\leq C \|u_0\|_{L^2}.$$

    \item[(ii)] There exists $\gamma>0$ such that
    for all $u_0\in H^{\beta/2}$, $$\|\partial_xU_\beta(t)u_0\|_{L^\infty_xL^2_T}\leq C T^\gamma
    \|u_0\|_{H^{\beta/2}}.$$

    \item[(iii)] For all $u_0\in H^{s}$, where
    $s>\frac{3}{4}-\frac{\beta}{4}$,
    $$
    \|U_\beta(t)u_0\|_{L^{2\beta}_xL^\infty_T}\leq C \|u_0\|_{H^{s+}}.
    $$
    \item[(iv)] For all $g\in L^1_xL^2_T$,
    $$
    \|D^{\beta/2}\int_0^t
    U_\beta(t-t')g(\cdot,t')dt'\|_{L^\infty_TL^2_x} \leq
    C\|g\|_{L^1_xL^2_T}.
    $$
    \item[(v)] For $0\leq s <\beta$, there exists $\gamma>0$ such that
    $$
    \|D^{s}\int_0^t
    U_\beta(t-t')g(\cdot,t')dt'\|_{L^\infty_xL^2_T} \leq
    CT^\gamma\|g\|_{L^1_xL^2_T}.
    $$
    \item[(vi)] There exists $\gamma>0$ such that
    $$
    \|\int_0^t
    U_\beta(t-t')g(\cdot,t')dt'\|_{L^\infty_TL^2_x} \leq
    CT^\gamma\|g\|_{L^1_xL^2_T}.
    $$
\end{itemize}
\end{lemma}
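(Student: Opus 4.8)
The plan is to treat the six bounds in order of increasing difficulty, since they are exactly the linear estimates recalled from \cite{KMR} for the unitary group $U_\beta(t)$ attached to the symbol $\phi(\xi)=|\xi|^\beta\xi$, and I would follow the oscillatory-integral framework of Kenig, Ponce and Vega that underlies them. The single structural fact driving everything is that $\phi$ is homogeneous of degree $\beta+1$ with $\phi''(\xi)=\beta(\beta+1)|\xi|^{\beta-1}\sgn(\xi)$ nonvanishing away from the origin. Estimate (i) is immediate: by Plancherel the multiplier $e^{it|\xi|^\beta\xi}$ has modulus one, so $\|U_\beta(t)u_0\|_{L^2_x}=\|u_0\|_{L^2}$ for every fixed $t$, and taking the supremum over $t\in[0,T]$ gives (i) with $C=1$.

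For the smoothing estimate (ii) I would first establish the sharp global bound
$$
\|D^{\beta/2}U_\beta(t)u_0\|_{L^\infty_xL^2_t}\le C\|u_0\|_{L^2},
$$
which follows from the change of variables $\eta=\phi(\xi)$ in the time-Fourier representation of $U_\beta(t)u_0(x)$: the Jacobian $|\phi'(\xi)|^{-1}\sim|\xi|^{-\beta}$ is precisely absorbed by the weight $|\xi|^\beta$ coming from $D^{\beta/2}$ squared, so that Plancherel in $t$ reduces the $L^2_t$ norm to $\|u_0\|_{L^2}$ uniformly in $x$. To pass to (ii) I would split into $|\xi|\ge1$ and $|\xi|<1$. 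On the high-frequency part $\partial_x=D^{\beta/2}D^{1-\beta/2}$ costs $1-\beta/2$ derivatives beyond the smoothing gain, and since $1-\beta/2\le\beta/2$ for $\beta\ge1$ the norm $H^{\beta/2}$ absorbs the loss; the \emph{strict} surplus available for $\beta>1$ is then traded, via H\"older in time on the finite interval $[0,T]$, for the gain $T^\gamma$. On the low-frequency part the multiplier $\partial_x$ is bounded, Bernstein gives $L^\infty_x\hookleftarrow L^2_x$, and the finiteness of $[0,T]$ again yields $T^\gamma$.

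The maximal-function estimate (iii) is the main obstacle, being a local-in-time bound of Strichartz--Sj\"olin type with an $L^\infty$ norm in $t$. I would establish it by localizing dyadically to frequencies $|\xi|\sim N$, writing each piece as an oscillatory integral, and estimating the associated kernel by van der Corput's lemma using $|\phi''|\sim N^{\beta-1}$ on the block. Summing the dyadic contributions and optimizing forces exactly the spatial exponent $q=2\beta$ and the threshold $s>\tfrac34-\tfrac\beta4$, the extra $H^{s+}$ (rather than $H^s$) being the customary endpoint loss in summing the Littlewood--Paley pieces. The delicate point is the interplay between the $L^\infty$-in-time norm and the oscillation of the kernel, which is what pins down the Sobolev exponent and makes this estimate considerably harder than the others.

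Finally, the inhomogeneous estimates (iv)--(vi) I would obtain by duality from the homogeneous ones, splitting each into its non-retarded version and a passage to the truncated integral $\int_0^t$. For (iv) the non-retarded operator equals $U_\beta(t)D^{\beta/2}h$ with $h=\int_{\mathbb R}U_\beta(-t')g\,dt'$, and the dual smoothing estimate gives $\|D^{\beta/2}h\|_{L^2}\le C\|g\|_{L^1_xL^2_T}$; since $D^{\beta/2}$ matches the smoothing gain exactly no time factor is needed, and the retarded integral is handled by the Christ--Kiselev lemma because the output time exponent $\infty$ strictly exceeds the input exponent $2$. Estimate (vi) is identical except that no derivative sits on the left, so $\|h\|_{L^2}$ must be controlled at low frequencies, which is precisely where the factor $T^\gamma$ enters; Christ--Kiselev again disposes of the truncation. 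Estimate (v) is the single case where the input and output time exponents coincide ($L^2_T$), so Christ--Kiselev is inapplicable; there I would bound the full-line operator by composing the dual smoothing estimate with the smoothing estimate, valid for $0\le s<\beta$, and then control the retarded truncation directly on $[0,T]$, the strict inequality $s<\beta$ once more furnishing $T^\gamma$.
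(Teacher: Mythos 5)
Your overall framework is the right one, and it is in fact the framework behind the result: the paper itself gives no argument for this lemma (its proof is the single line ``See Lemma 1 in \cite{KMR}''), and the proofs in that reference are exactly the Kenig--Ponce--Vega oscillatory-integral estimates you describe. Items (i), (ii), (iii), (iv) and (vi) are correct in outline: Plancherel for (i); the homogeneous local-smoothing estimate with gain $D^{\beta/2}$, its dual, and the mixed-norm Christ--Kiselev lemma (the version in \cite{MR}, which the paper itself invokes in Corollary \ref{maximalcor}) for (iv) and (vi); dyadic localization plus van der Corput kernel bounds for the maximal estimate (iii). One small inaccuracy: in (ii) and (vi) the surplus of derivatives is not converted into $T^\gamma$ by ``H\"older in time'' alone; on each dyadic block one must interpolate the smoothing bound against the trivial Bernstein-plus-H\"older bound (e.g.\ $\|\partial_x U_\beta(t)P_Nu_0\|_{L^\infty_xL^2_T}\lesssim T^{1/2}N^{3/2}\|P_Nu_0\|_{L^2}$), and only the strict inequality $\beta>1$ leaves room for a positive interpolation weight. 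This is recoverable, but it is a genuine step, not bookkeeping.

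The genuine gap is item (v). You correctly identify that Christ--Kiselev is inapplicable there (input and output time exponents both equal $2$), but your substitute --- bound the full-line operator by composing the dual smoothing estimate with the smoothing estimate, then ``control the retarded truncation directly on $[0,T]$'' --- does not reduce anything: the difference between the full-line operator $\int_0^T U_\beta(t-t')g\,dt'$ and the retarded one $\int_0^t U_\beta(t-t')g\,dt'$ is $\int_t^T U_\beta(t-t')g\,dt'$, an operator of exactly the same (anti-retarded) type, hence just as hard as what you started with. Equivalently, writing $\chi_{\{t'<t\}}=\tfrac12\bigl(1+\sgn(t-t')\bigr)$ shows that after subtracting the full-line piece one is left with the operator with kernel $\sgn(t-t')\,U_\beta(t-t')$, whose $L^1_xL^2_T\to L^\infty_xL^2_T$ boundedness with a gain of $s<\beta$ derivatives \emph{is} the content of (v); in \cite{KePoVe} and \cite{KMR} it is proved by a direct Fourier analysis of the truncated Duhamel term, in which the sharp time cutoff produces a principal-value multiplier of the form $(\tau-|\xi|^\beta\xi)^{-1}$ that must be handled as a singular integral. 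This is the hardest estimate of the six, it is the one the contraction argument actually leans on (it is used with $s=1$ in \eqref{b3} of the proof of Theorem \ref{localdbo}), and your proposal replaces it with a single unsupported clause.
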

\begin{proof}
See Lemma 1 in \cite{KMR}.
\end{proof}

\begin{corollary}\label{maximalcor}
Assume  $1<\beta<2$, $k>2\beta$ and let $1/2+$ denotes any number bigger than $1/2$. The following statements hold.
\begin{itemize}
    \item[(i)] For all $u_0\in H^{1/2+}$,
    $$
    \|U_\beta(t)u_0\|_{L^{k}_xL^\infty_T}\leq C \|u_0\|_{H^{1/2+}}
    $$
    \item[(ii)] For all $g\in L^1_xL^2_T$ such that $\partial_xg\in
    L^1_xL^2_T$,
    $$
    \|\int_0^tU_\beta(t-t')\partial_xg(\cdot,t')dt'\|_{L^{k}_xL^\infty_T}\leq
    C\|\partial_xg\|_{L^1_xL^2_T}.
    $$
\end{itemize}
\end{corollary}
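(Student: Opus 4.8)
The plan is to derive both estimates from the sharp homogeneous maximal function estimate in Lemma~\ref{lemmalinear}(iii) by raising the spatial exponent from $2\beta$ to $k>2\beta$ via interpolation, and then to transfer the homogeneous bound to the Duhamel term. For (i), I would interpolate the two homogeneous endpoints
\[
\|U_\beta(t)u_0\|_{L^{2\beta}_xL^\infty_T}\le C\|u_0\|_{H^{s+}},\quad s=\tfrac34-\tfrac\beta4,
\qquad
\|U_\beta(t)u_0\|_{L^\infty_xL^\infty_T}\le C\|u_0\|_{H^{1/2+}},
\]
the first being Lemma~\ref{lemmalinear}(iii) and the second following from the Sobolev embedding $H^{1/2+}\hookrightarrow L^\infty$ applied at each fixed time together with the fact that $U_\beta(t)$ is unitary on every $H^\sigma$. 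Complex interpolation in the spatial exponent (keeping the fixed target $L^\infty_T$, which is legitimate for vector-valued $L^p_x$ spaces) with weight $\theta=2\beta/k\in(0,1)$ on the first endpoint — here $k>2\beta$ is exactly what forces $\theta\in(0,1)$, and $1/k=\theta/(2\beta)+(1-\theta)/\infty$ — produces $L^k_xL^\infty_T$ at Sobolev regularity
\[
\sigma=\theta\Big(\tfrac34-\tfrac\beta4\Big)+(1-\theta)\tfrac12=\tfrac12+\frac{\beta(1-\beta)}{2k}.
\]
Since $1<\beta<2$ gives $\sigma<\tfrac12$, we have $H^{1/2+}\hookrightarrow H^{\sigma}$ and (i) follows at once.

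For (ii), set $h=\partial_x g$ and use the group law $U_\beta(t-t')=U_\beta(t)U_\beta(-t')$ to identify the \emph{non-retarded} operator
\[
\Psi(h)(t):=\int_0^T U_\beta(t-t')h(\cdot,t')\,dt'=U_\beta(t)w_0,\qquad w_0:=\int_0^T U_\beta(-t')h(\cdot,t')\,dt'.
\]
By part (i), $\|\Psi(h)\|_{L^k_xL^\infty_T}=\|U_\beta(t)w_0\|_{L^k_xL^\infty_T}\le C\|w_0\|_{H^{1/2+}}$, so it remains to bound $\|w_0\|_{H^{1/2+}}$ by $\|h\|_{L^1_xL^2_T}$. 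Evaluating the Duhamel term at $t=T$ gives $U_\beta(T)w_0$, and since $D^{\beta/2}$ commutes with the unitary multiplier $U_\beta(T)$, Lemma~\ref{lemmalinear}(vi) and (iv) (applied with forcing $h$) yield $\|w_0\|_{L^2}\le CT^\gamma\|h\|_{L^1_xL^2_T}$ and $\|D^{\beta/2}w_0\|_{L^2}\le C\|h\|_{L^1_xL^2_T}$, hence $\|w_0\|_{H^{\beta/2}}\le C\|h\|_{L^1_xL^2_T}$. Because $\beta>1$ we have $\beta/2>1/2$, so choosing the exponent ``$1/2+$'' below $\beta/2$ gives $H^{\beta/2}\hookrightarrow H^{1/2+}$ and therefore $\|\Psi(h)\|_{L^k_xL^\infty_T}\le C\|h\|_{L^1_xL^2_T}$.

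The main obstacle is that the operator in (ii) is the causal integral $\Phi(h)(t)=\int_0^t U_\beta(t-t')h\,dt'$, not $\Psi$. To pass from the bound on $\Psi$ to one on $\Phi$ I would invoke the Christ--Kiselev lemma: the relevant temporal exponents are $L^2_T$ on the input side (since $h\in L^1_xL^2_T$) and $L^\infty_T$ on the output side, and because $2<\infty$ the lemma transfers the estimate to the retarded operator $\Phi$, giving (ii). The only genuinely delicate points are the justification of the mixed-space interpolation and the bookkeeping of the several ``$+$'' regularities; the essential structural inputs are the maximal/smoothing pair of Lemma~\ref{lemmalinear} and the strict inequality $\beta>1$, which is precisely what makes $H^{\beta/2}\hookrightarrow H^{1/2+}$ and hence the whole transference scheme work.
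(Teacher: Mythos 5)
Your proof is correct, and part (i) is essentially identical to the paper's argument (interpolating Lemma \ref{lemmalinear}-(iii) against the Sobolev/unitarity bound in $L^\infty_xL^\infty_T$; the paper simply quotes both endpoints at regularity $H^{1/2+}$ rather than tracking the interpolated exponent $\sigma<1/2$ as you do). Part (ii), however, takes a genuinely different route. The paper follows \cite[Lemma 1-(xi)]{KMR}: it introduces Littlewood--Paley projections $P_n$, bounds each frequency-localized \emph{non-retarded} operator $\widetilde{T}_n$ in $L^k_xL^\infty_T$ with a factor $2^{n((1/2+)-\beta/2)}$ (using part (i) plus Bernstein to convert the $H^{1/2+}$ norm into the $D^{\beta/2}$ norm that Lemma \ref{lemmalinear}-(iv) controls), and then applies Christ--Kiselev to each dyadic piece and sums the resulting geometric series, which is where the condition $1/2+<\beta/2$ enters. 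You avoid the dyadic decomposition altogether: you bound the full non-retarded Duhamel operator by writing it as $U_\beta(t)w_0$, controlling $\|w_0\|_{H^{\beta/2}}$ via Lemma \ref{lemmalinear}-(vi) (for the $L^2$ part) \emph{and} (iv) (for the $\dot H^{\beta/2}$ part), and then using the global embedding $H^{\beta/2}\hookrightarrow H^{1/2+}$ --- the same inequality $1/2+<\beta/2$, but used once instead of frequency-by-frequency --- followed by a single application of Christ--Kiselev. Your version is cleaner: it needs no summation over frequencies (the paper's geometric factor actually requires separate care at low frequencies, where $2^{n((1/2+)-\beta/2)}$ is large, a point the paper glosses over by deferring to \cite{KMR}), at the modest price of also invoking estimate (vi), which the paper's chain does not use. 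One small imprecision worth fixing: for mixed norms the Christ--Kiselev transference condition is not just a comparison of the temporal exponents ``$2<\infty$''; in the form of \cite[Lemma 3]{MR} one needs the minimum of the output exponents to exceed the maximum of the input exponents, i.e.\ $\min(k,\infty)>\max(1,2)$. This holds here precisely because $k>2\beta>2$, so your application is legitimate, but the hypothesis you should quote is $k>2$, not $2<\infty$.
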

\begin{proof}
From Lemma \ref{lemmalinear}-(iii), we have
\begin{equation}\label{lemiii}
\|U_\beta(t)u_0\|_{L^{2\beta}_xL^\infty_T}\leq C \|u_0\|_{H^{1/2+}}.
\end{equation}
Also, from Sobolev's embedding,
\begin{equation}\label{sobemb}
\|U_\beta(t)u_0\|_{L^{\infty}_xL^\infty_T}\leq C \|u_0\|_{H^{1/2+}}.
\end{equation}
Hence, (i) follows just interpolating \eqref{lemiii} and \eqref{sobemb}.

The proof of (ii) follows exactly as in \cite[Lemma 1-(xi)]{KMR}. Indeed,
let $P_n$ be the projection on frequencies $\simeq2^n$ and define
$$
T_ng(\cdot,t)=\int_0^tU_\beta(t-t')P_n\partial_xg(\cdot,t')dt',
$$
$$
\widetilde{T}_ng(\cdot,t)=\int_0^TU_\beta(t-t')P_n\partial_xg(\cdot,t')dt'.
$$
Choose a number $1/2+$ such that $1/2+<\beta/2$. Then, by (i), localization
in frequencies, and Lemma \ref{lemmalinear}-(iv), we deduce
\begin{equation*}
\begin{split}
\|\widetilde{T}_ng\|_{L^k_xL^\infty_T}&=\|U_\beta(t)\int_0^TU_\beta(-t')P_n\partial_xg(\cdot,t')dt'\|_{L^k_xL^\infty_T}\\
& \leq
C2^{n((1/2+)-\beta/2)}\|D^{\beta/2}\int_0^TU_\beta(-t')P_n\partial_xg(\cdot,t')dt'\|_{L^2_x}\\
& \leq C2^{n((1/2+)-\beta/2)}\|D^{\beta/2}\int_0^TU_\beta(T-t')P_n\partial_xg(\cdot,t')dt'\|_{L^2_x}\\
& \leq C2^{n((1/2+)-\beta/2)}\|\partial_xg\|_{L^1_xL^2_T}.
\end{split}
\end{equation*}
The conclusion then follows from the Christ and Kiselev lemma (see \cite[Lemma 3]{MR}), as in \cite{KMR}.
\end{proof}

Now we are able to prove the following well-posedness result.

\begin{theorem}\label{localdbo}
Let $1<\beta<2$ and $k>2\beta$. For any $u_0 \in H^{\beta/2}(\mathbb{R})$,
there exist $T=T(\|u_0\|_{H^{\beta/2}})>0$ and a unique solution of the IVP
\eqref{dBO}, defined in the interval $[0,T]$, such that
\begin{equation}\label{b1.2}
u \in C([0,T];H^{\beta/2}(\mathbb{R})),
\end{equation}
\begin{equation}\label{b2.2}
\|\partial_xu\|_{L^\infty_xL^2_{T}} <\infty,
\end{equation}
and
\begin{equation}\label{b3.2}
\|u\|_{L^{k}_x L^\infty_{T}}<\infty.
\end{equation}
Moreover, for any $T'\in(0,T)$ there exists a neighborhood $U$ of $u_0$ in
$H^{\beta/2}(\mathbb{R})$ such that the map $\widetilde{u}_0\mapsto
\widetilde{u}(t)$ from $U$ into the class defined by
\eqref{b1.2}--\eqref{b3.2} is continuous.
\end{theorem}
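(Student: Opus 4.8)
The plan is to solve \eqref{dBO} by the contraction mapping principle applied to the Duhamel map
\begin{equation*}
\Phi(u)(t)=U_\beta(t)u_0-\int_0^tU_\beta(t-t')\,\p_x\big(u^{k+1}\big)(t')\,dt',
\end{equation*}
in a space whose norm records exactly the three quantities in \eqref{b1.2}--\eqref{b3.2},
\begin{equation*}
\trinorm u\trinorm_T:=\|u\|_{L^\infty_TH^{\beta/2}_x}+\|\p_xu\|_{L^\infty_xL^2_T}+\|u\|_{L^k_xL^\infty_T}.
\end{equation*}
The linear term is estimated by Lemma \ref{lemmalinear}(i) (which, since $U_\beta(t)$ commutes with $D^{\beta/2}$, also controls the $H^{\beta/2}$ piece), by the smoothing estimate Lemma \ref{lemmalinear}(ii), and by the maximal estimate Corollary \ref{maximalcor}(i) together with $H^{\beta/2}\hookrightarrow H^{1/2+}$ (valid because $\beta>1$); this bounds $\trinorm U_\beta(\cdot)u_0\trinorm_T$ by $C\|u_0\|_{H^{\beta/2}}$. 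For the Duhamel term I would use Lemma \ref{lemmalinear}(iv),(vi) for the energy piece, the inhomogeneous smoothing estimate Lemma \ref{lemmalinear}(v) with $s$ close to $1$ (admissible precisely because $1<\beta$) for the smoothing piece, and Corollary \ref{maximalcor}(ii) for the maximal piece. In every case the linear estimates reduce the nonlinear contribution to the single scalar $\|\p_x(u^{k+1})\|_{L^1_xL^2_T}$.

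The key nonlinear estimate is then obtained by Hölder, first in $t$ and then in $x$,
\begin{equation*}
\|\p_x(u^{k+1})\|_{L^1_xL^2_T}=(k+1)\|u^k\p_xu\|_{L^1_xL^2_T}\le C\|u\|_{L^k_xL^\infty_T}^{\,k}\,\|\p_xu\|_{L^\infty_xL^2_T},
\end{equation*}
which places the $k$ undifferentiated factors in the maximal-function norm and the single derivative in the smoothing norm. Here the hypothesis $k>2\beta$ is essential: it is exactly what makes the norm $L^k_xL^\infty_T$ available, via the interpolation between Lemma \ref{lemmalinear}(iii) and a Sobolev bound carried out in Corollary \ref{maximalcor}.

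The decisive structural feature is that only the smoothing estimates Lemma \ref{lemmalinear}(ii),(v) carry a factor $T^\gamma$; the energy estimate (iv) and the maximal estimates of Corollary \ref{maximalcor} do not, and the nonlinear estimate above cannot generate a power of $T$ on its own, since keeping $\p_xu$ in $L^2_T$ forbids any further Hölder gain in time. Thus a single ball $\{\trinorm u\trinorm_T\le R\}$ would close only for small data. Instead I would iterate on the two-parameter set
\begin{equation*}
X_T(a,b)=\big\{u:\ \|u\|_{L^\infty_TH^{\beta/2}_x}+\|u\|_{L^k_xL^\infty_T}\le a,\ \ \|\p_xu\|_{L^\infty_xL^2_T}\le b\big\},
\end{equation*}
with $a\simeq\|u_0\|_{H^{\beta/2}}$ and $b\simeq T^\gamma\|u_0\|_{H^{\beta/2}}$. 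Because the nonlinear estimate is linear in the smoothing norm, the nonlinear contribution to each component is $\lesssim a^kb$; inserting this into the linear bounds shows that $\Phi$ maps $X_T(a,b)$ into itself as soon as $CT^\gamma a^k\le\tfrac12$. Since $a$ depends only on $\|u_0\|_{H^{\beta/2}}$, this is achieved by taking $T=T(\|u_0\|_{H^{\beta/2}})>0$ small, which is precisely the norm-dependence of the existence time asserted in the theorem.

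The main obstacle is the contraction step. Expanding $u^{k+1}-v^{k+1}=(u-v)\sum_{j=0}^{k}u^jv^{k-j}$ and distributing the derivative produces two types of terms: one of size $a^k\|\p_x(u-v)\|_{L^\infty_xL^2_T}$ and one of size $a^{k-1}b\,\|u-v\|_{L^k_xL^\infty_T}$. The second is harmless because its coefficient $a^{k-1}b\simeq T^\gamma\|u_0\|^k$ is small, but the first has coefficient $a^k\simeq\|u_0\|^k$, which is \emph{not} small, so $\Phi$ is not a contraction in the unweighted distance on $X_T(a,b)$. I would circumvent this by metrizing $X_T(a,b)$ with the weighted distance
\begin{equation*}
N(u,v)=\|u-v\|_{L^\infty_TH^{\beta/2}_x}+\|u-v\|_{L^k_xL^\infty_T}+\lambda\,\|\p_x(u-v)\|_{L^\infty_xL^2_T},\qquad\lambda\simeq\|u_0\|^k,
\end{equation*}
so that the large coefficient $a^k$ multiplies a smoothing difference that itself comes with a $T^\gamma$ gain; choosing $T=T(\|u_0\|_{H^{\beta/2}})$ small then yields $N(\Phi u,\Phi v)\le\tfrac12N(u,v)$, and Banach's fixed point theorem provides the unique solution. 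Persistence $u\in C([0,T];H^{\beta/2})$ follows from the strong continuity of the group together with the continuity in $t$ of the Duhamel integral, while the continuous-dependence claim follows from the same difference estimates applied to two data on any subinterval $[0,T']\subset[0,T]$.
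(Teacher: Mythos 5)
Your proposal is correct and follows essentially the same route as the paper: the same Duhamel operator, the same three norms, the same linear estimates (Lemma \ref{lemmalinear} (i), (ii), (iv)--(vi) and Corollary \ref{maximalcor}), and the same key H\"older estimate $\|u^k\partial_x u\|_{L^1_xL^2_T}\le \|u\|_{L^k_xL^\infty_T}^k\|\partial_x u\|_{L^\infty_xL^2_T}$ with existence time determined by $T^\gamma\|u_0\|_{H^{\beta/2}}^k\lesssim 1$. Your two-parameter set with weighted metric (smoothing bound $b\simeq T^\gamma\|u_0\|_{H^{\beta/2}}$, weight $\lambda\simeq\|u_0\|_{H^{\beta/2}}^k$) is just a reparametrization of the paper's device of placing the factor $T^{-\gamma}$ on the smoothing component of the norm $\trinorm\cdot\trinorm$; since the smallness condition forces $T^{-\gamma}\simeq\|u_0\|_{H^{\beta/2}}^k$, the two weightings are comparable and resolve the large-data contraction issue in the same way.
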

\begin{proof}
As usual, we consider the integral operator
\begin{equation}\label{Psi}
\Psi(u)(t)=\Psi_{u_0}(u)(t):=U_\beta(t)u_0-\int_0^t
U_\beta(t-t')\partial_x(u^{k+1})(t')dt',
\end{equation}
and define the metric spaces
$$
\mathcal{Y}_T=\{ u \in C([0,T];H^{\beta/2}(\mathbb{R})); \,\,\,\, \trinorm u
\trinorm <\infty \}
$$
and
$$
\mathcal{Y}_T^a=\{ u \in \mathcal{Y}_T; \,\,\,\, \trinorm u \trinorm \leq a
\},
$$
with
\begin{equation*}
\begin{split}
\trinorm u \trinorm:= \|u\|_{L^\infty_TH^{\beta/2}_x}+
\|u\|_{L^{k}_xL^\infty_{T}} + T^{-\gamma}\|u_x\|_{L^{\infty}_xL^2_{T}}
\end{split}
\end{equation*}
where $\gamma >0$ is an arbitrarily small number and $a,T>0$ will be chosen
later. We assume $T\leq 1$.

First we estimate the $H^{\beta/2}$-norm of $\Psi(u)$. Let $u\in
\mathcal{Y}_T$. By using Lemma \ref{lemmalinear} (i) and (vi), and then
H\"older's inequality, we have
\begin{equation}\label{b1}
\begin{split}
 \|\Psi(u)(t) \|_{L^\infty_TL^2_{x}} & \leq   \|U_\beta(t)u_0\|_{L^\infty_TL^2_x}+
 \|\int_0^t
U_\beta(t-t')\partial_x(u^{k+1})(t')dt'\|_{L^\infty_TL^2_x}\\
& \leq C\|u_0\|_{L^2_x}+C\|u^ku_x\|_{L^1_xL^2_T}\\
&\leq
C\|u_0\|_{L^2_x}+CT^\gamma\|u\|_{L^k_xL^\infty_T}^kT^{-\gamma}\|u_x\|_{L^\infty_xL^2_T}\\
& \leq C\|u_0\|_{L^2_x}+CT^\gamma\trinorm u \trinorm^{k+1}.
\end{split}
\end{equation}
Also, using Lemma \ref{lemmalinear} (i) and (iv), and Holder's inequality,
we get
\begin{equation}\label{b2}
\begin{split}
 \|D^{\beta/2}&\Psi(u)(t) \|_{L^\infty_TL^2_{x}}\\
  & \leq   \|D^{\beta/2}U_\beta(t)u_0\|_{L^\infty_TL^2_x}+
 \|D^{\beta/2}\int_0^t
U_\beta(t-t')\partial_x(u^{k+1})(t')dt'\|_{L^\infty_TL^2_x}\\
& \leq C\|D^{\beta/2}u_0\|_{L^2_x}+C\|u^ku_x\|_{L^1_xL^2_T}\\
& \leq C\|u_0\|_{H^{\beta/2}}+CT^\gamma\trinorm u \trinorm^{k+1}.
\end{split}
\end{equation}

Now, in view of Lemma \ref{lemmalinear} (ii) and (v) (with $s=1$),
\begin{equation}\label{b3}
\begin{split}
 \|\partial_x\Psi(u)(t) \|_{L^\infty_xL^2_{T}} & \leq   \|\partial_xU_\beta(t)u_0\|_{L^\infty_xL^2_T}+
 \|\partial_x\int_0^t
U_\beta(t-t')\partial_x(u^{k+1})(t')dt'\|_{L^\infty_xL^2_T}\\
& \leq C\|u_0\|_{H^{\beta/2}}+C\|u^ku_x\|_{L^1_xL^2_T}\\
& \leq C\|u_0\|_{H^{\beta/2}}+CT^\gamma\trinorm u \trinorm^{k+1}.
\end{split}
\end{equation}

Finally, taking a number $1/2+<\beta/2$, an application of Corollary \ref{maximalcor} yields
\begin{equation}\label{b4}
\begin{split}
 \|\Psi(u)(t) \|_{L^k_xL^\infty_{T}} & \leq   \|U_\beta(t)u_0\|_{L^k_xL^\infty_T}+
 \|\int_0^t U_\beta(t-t')\partial_x(u^{k+1})(t')dt'\|_{L^k_xL^\infty_T}\\
& \leq C\|u_0\|_{H^{1/2+}}+C\|u^ku_x\|_{L^1_xL^2_T}\\
& \leq C\|u_0\|_{H^{\beta/2}}+CT^\gamma\trinorm u \trinorm^{k+1}.
\end{split}
\end{equation}

 Choose $a=6C
\|u_0\|_{H^{\beta/2}}$, and $T>0$ such that
$$
C\,a^k T^{\gamma} \leq \frac{1}{6}.
$$
Then, we see that $\Psi:\mathcal{Y}_T^a \mapsto \mathcal{Y}_T^a$ is well
defined. Moreover, similar arguments show that $\Psi$ is a contraction. The rest of the proof uses standard arguments, thus, we omit the details.
\end{proof}

\section{Global well-posedness in the energy space}

Let us start by recalling the following sharp Gagliardo-Nirenberg type inequality.

\begin{theorem}\label{bestd}
Let $k>0$ and $1<\beta<2$,  then the Gagliardo-Nirenberg inequality
\begin{equation}\label{g-n-d}
\|f\|_{L^{k+2}}^{k+2}\le K_{\rm opt}^{k+2}\,\|D^{\beta/2}
f\|_{L^2}^{k/\beta}\|f\|_{L^2}^{(2+(k+2)(\beta-1))/\beta},
\end{equation}
holds, and the sharp constant $K_{\rm opt}>0$ is
\begin{equation}\label{optd}
K_{\rm
opt}^{k+2}=\frac{(k+2)\beta}{2+(k+2)(\beta-1)}\left[\left(\frac{2+(k+2)(\beta-1)}{k}\right)^{1/\beta}
\frac{1}{\|Q\|_{L^2}^2}\right]^{k/2},
\end{equation}
where $Q$ is the unique non-negative, radially-symmetric, decreasing
solution of the equation
\begin{equation}\label{groundd}
D^\beta Q+Q-Q^{k+1}=0.
\end{equation}
\end{theorem}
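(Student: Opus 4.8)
The plan is to realize $1/K_{\rm opt}^{k+2}$ as the infimum of the Weinstein functional
\[
W(f)=\frac{\|D^{\beta/2}f\|_{L^2}^{k/\beta}\,\|f\|_{L^2}^{p/\beta}}{\|f\|_{L^{k+2}}^{k+2}},\qquad p:=2+(k+2)(\beta-1),
\]
over $f\in H^{\beta/2}(\R)\setminus\{0\}$, so that \eqref{g-n-d} is precisely the assertion $W(f)\ge 1/K_{\rm opt}^{k+2}$. That $W$ is bounded below by a positive constant is the non-sharp fractional Gagliardo–Nirenberg inequality, which follows from the Sobolev embedding $H^{\beta/2}\hookrightarrow L^{k+2}$ together with the scaling check below; the sharp value of the infimum is what must be identified.

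First I would record that $W$ is invariant under the two-parameter family $f\mapsto \mu\,f(\lambda\,\cdot)$. Writing $A(f)=\|D^{\beta/2}f\|_{L^2}^2$, $B(f)=\|f\|_{L^2}^2$, and $C(f)=\|f\|_{L^{k+2}}^{k+2}$, one has $A(\mu f(\lambda\cdot))=\mu^2\lambda^{\beta-1}A$, $B(\mu f(\lambda\cdot))=\mu^2\lambda^{-1}B$, and $C(\mu f(\lambda\cdot))=\mu^{k+2}\lambda^{-1}C$, and a direct check shows the powers of $\mu$ and of $\lambda$ cancel in $W$. Next I would invoke the existence of a minimizer from \cite{We} and, crucially, its uniqueness up to these symmetries from \cite{FL}: the Euler–Lagrange equation for a minimizer is a fractional equation $a_1 D^\beta\phi+a_2\phi=a_3\phi^{k+1}$ whose coefficients are fixed by the norms of $\phi$, and the two scaling degrees of freedom let one normalize $\phi$ to the solution $Q$ of \eqref{groundd}. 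Thus $\inf W=W(Q)$, and it remains to compute $W(Q)$ in closed form.

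The evaluation of $W(Q)$ rests on two identities. Pairing \eqref{groundd} with $Q$ in $L^2$ gives the Nehari relation $A+B=C$ (all evaluated at $Q$). A Pohozaev-type identity comes from differentiating the action $S(\lambda)=\tfrac12\lambda^{\beta-1}A+\tfrac12\lambda^{-1}B-\tfrac1{k+2}\lambda^{-1}C$ of the dilate $Q(\lambda\cdot)$ and using $S'(1)=0$, which yields $\tfrac{\beta-1}{2}A-\tfrac12 B+\tfrac1{k+2}C=0$. Solving this linear system gives $A=\tfrac{k}{p}B$ and $C=\tfrac{(k+2)\beta}{p}B$, where one uses $k+p=(k+2)\beta$.

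Finally, substituting these ratios into $W(Q)=A^{k/(2\beta)}B^{p/(2\beta)}/C$ and simplifying the exponents via $(k+p)/(2\beta)=(k+2)/2$ produces $W(Q)=\tfrac{p}{(k+2)\beta}\,(k/p)^{k/(2\beta)}\,\|Q\|_{L^2}^{k}$; inverting yields exactly \eqref{optd}. The only genuinely hard inputs are the existence and, above all, the uniqueness of the ground state that guarantee the infimum is attained at $Q$; once these are granted, the rest is the purely algebraic Nehari/Pohozaev bookkeeping above. Alternatively, one may simply quote \cite{ABLS}, where this sharp constant is established directly.
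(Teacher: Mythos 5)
Your proposal is correct, but it takes a different route from the paper: the paper does not reprove the inequality at all --- its entire proof is a citation, quoting \cite[Theorem 2.1]{ABLS} for the sharp constant and \cite{FL} (and \cite{AT} for $\beta=1$) for the uniqueness of $Q$, which is exactly the fallback you mention in your last sentence. What you have written out is, in essence, the variational argument behind the cited result: scale invariance of the Weinstein functional, existence of a minimizer from \cite{We}, normalization of the Euler--Lagrange equation to \eqref{groundd}, identification with $Q$ via the uniqueness of \cite{FL}, and then the Nehari/Pohozaev bookkeeping. Your algebra checks out: the scaling exponents do cancel (using $k+p=(k+2)\beta$), and your two identities $A=\tfrac{k}{p}B$ and $C=\tfrac{(k+2)\beta}{p}B$ are precisely the paper's relations \eqref{c3} and \eqref{c4}, which the paper derives independently after the theorem (by pairing \eqref{groundd} with $Q$ and with $x\partial_xQ$, the latter being the integration-by-parts counterpart of your dilation derivative $S'(1)=0$) because they are needed again to compute $E(Q)$ in the proof of Theorem \ref{global_dgbo}. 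The one detail you gloss over is the identification step: to invoke the uniqueness theorem of \cite{FL} you must first know the minimizer can be taken non-negative, even and decreasing, which requires a rearrangement argument (fractional Polya--Szeg\H{o}, valid here since $\beta/2<1$) or a symmetrization of $|f|$; without it, the Euler--Lagrange equation only gives $a_1D^\beta\phi+a_2\phi=a_3|\phi|^{k}\phi$ for some sign-changing $\phi$, and the normalization to $Q$ is not immediate. Your unpacked argument buys transparency about where the constant \eqref{optd} comes from; the paper's citation buys brevity and defers exactly these compactness and symmetrization issues to \cite{ABLS}.
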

\begin{proof}
The sharpness of the constant in \eqref{optd} was proved in \cite[Theorem
2.1]{ABLS}. The uniqueness of the non-negative, radially-symmetric,
decreasing (for $x>0$) solution of \eqref{groundd} was recently proved in \cite{FL} for the case $\beta \in (1,2)$. For $\beta=1$ it was proved by Amick and Toland \cite{AT}. 
\end{proof}

Let us establish some useful identities involving the ground state  $Q$.
First, by multiplying \eqref{groundd} by $Q$, integrating over $\mathbb{R}$,
we obtain
\begin{equation}\label{c1}
\int_{\R}Q^{k+2}\, dx=\|Q\|_{L^{2}}^{2}+\|D^{\beta/2} Q\|_{L^{2}}^{2}.
\end{equation}

Second, by multiplying \eqref{groundd} by $x\partial_x Q$, integrating, and
applying integration by parts, we obtain
\begin{equation}\label{c2}
\frac{2}{k+2}\int_{\R}Q^{k+2}\,
dx=\|Q\|_{L^{2}}^{2}-(\beta-1)\|D^{\beta/2}Q\|_{L^2}^2,
\end{equation}
where we have used the identity
$$
\int_{\mathbb{R}}x\partial_xQ(D^\beta
Q)dx=\frac{\beta-1}{2}\int_{\mathbb{R}}|D^{\beta/2}Q|^2dx.
$$
Combining  \eqref{c1} with \eqref{c2}, we conclude that
\begin{equation}\label{c3}
\frac{ k}{ 2+(k+2)(\beta-1)} \|Q\|_{L^{2}}^{2}=\|D^{\beta/2} Q\|_{L^{2}}^{2}
\end{equation}
and
\begin{equation}\label{c4}
\frac{2+(k+2)(\beta-1)}{k+2}
\int_\mathbb{R}Q^{k+2}\,dx=\beta\|Q\|_{L^{2}}^{2}.
\end{equation}

With these tools in hand, we are able to prove the following a priori
estimate.

\begin{proof}[Proof of Theorem \ref{global_dgbo}] We write the $\dot{H}^{\beta/2}$-norm of $u(t)$
using the quantities $M(u(t))$ and $E(u(t))$ and then use the sharp
Gagliardo-Nirenberg inequality \eqref{g-n-d} to get
\begin{equation}\label{ap10d}
\begin{split}
\|D^{\beta/2} u(t)\|_{L^2}^2&=2E(u_0)+\frac{2}{k+2} \int_{\R}u^{k+2}(t)\,dx\\
&\le 2E(u_0)+\frac{2}{k+2}K_{\rm opt}^{k+2}\,
\|u_0\|_{L^2}^{(2+(k+2)(\beta-1))/\beta}\|D^{\beta/2} u(t)\|_{L^2}^{k/\beta}
\end{split}
\end{equation}
Let $X(t)=\|D^{\beta/2} u(t)\|_{L^2}^2$, $A=2E(u_0)$, and
$$
B=\frac{2}{k+2}K_{\rm opt}^{k+2}\, \|u_0\|_{L^2}^{(2+(k+2)(\beta-1))/\beta}.
$$
Therefore,  we can write \eqref{ap10d} as
\begin{equation}\label{ap12d}
X(t)-B\,X(t)^{k/2\beta}\le A, \text{\hskip2pt for}\;\;t\in (0,T).
\end{equation}

Now let $f(x)=x-B\,x^{k/2\beta}$, for $x\ge 0$. The function $f$ has a local
maximum at $x_0=\Big(\dfrac{2\beta}{kB}\Big)^{2\beta/(k-2\beta)}$ with
maximum value
$f(x_0)=\dfrac{k-2\beta}{k}\Big(\dfrac{2\beta}{kB}\Big)^{2\beta/(k-2\beta)}.$
If we require that
\begin{equation}\label{ap13d}
2E(u_0) < f(x_0)\,\,\,\, \mbox{and}\, \,\,\, X(0) < x_0,
\end{equation}
the continuity of $X(t)$ implies that $X(t) < x_0$ for any $t$ as long as
the solution exists.

Using \eqref{c3} and \eqref{c4}, we immediately  deduce that
\begin{equation}\label{EQ}
E(Q)=\frac{1}{2}\dfrac{k-2\beta}{2+(k+2)(\beta-1)}\|Q\|_{L^2}^2.
\end{equation}
Therefore, a simple calculation shows that conditions (\ref{ap13d}) are
exactly the inequalities (\ref{GR1d}) and (\ref{GR2d}). Indeed,
\begin{equation*}
\begin{split}
2E(u_0)<f(x_0)\quad \Leftrightarrow\quad
2E(u_0)<\dfrac{k-2\beta}{k}\Big(\dfrac{2\beta}{kB}\Big)^{2\beta/(k-2\beta)}\\
\end{split}
\end{equation*}
Using the explicit form of $K_{\rm opt}^{k+2}$ in \eqref{optd}, we have
$$
\Big(\dfrac{2\beta}{kB}\Big)^{2\beta/(k-2\beta)}=\frac{k}{2+(k+2)(\beta-1)}
\frac{\|Q\|_{L^2}^{2k\beta/(k-2\beta)}}{\|u_0\|_{L^2}^{2(2+(k+2)(\beta-1))/(k-2\beta)}}.
$$
Thus,
\begin{equation*}
\begin{split}
&2E(u_0)<f(x_0)\\
&\Leftrightarrow \\
&E(u_0)\|u_0\|_{L^2}^{2(2+(k+2)(\beta-1))/(k-2\beta)}<\dfrac{(k-2\beta)\|Q\|_{L^2}^{2+2(2+(k+2)(\beta-1))/(k-2\beta)}}{2(2+(k+2)(\beta-1))}\\
&\Leftrightarrow E(u_0)\|u_0\|_{L^2}^{2(2+(k+2)(\beta-1))/(k-2\beta)}
<E(Q)\|Q\|_{L^2}^{2(2+(k+2)(\beta-1))/(k-2\beta)}\\
&\Leftrightarrow E(u_0)^{s_k} M(u_0)^{\beta/2-s_k} < E(Q)^{s_k}
M(Q)^{\beta/2-s_k}.
\end{split}
\end{equation*}

One see that the second inequality in (\ref{ap13d}) is equivalent to (\ref{GR2d}) in a similar way.
Moreover the inequality $X(t) < x_0$ reduces to (\ref{GR3d}). The proof of
Theorem \ref{global_dgbo} is thus completed.
\end{proof}

\vspace{3mm} \noindent{\large {\bf Acknowledgments}}
\vspace{3mm}\\  L.G.F. was partially supported by CNPq and FAPEMIG/Brazil. F. L. was partially supported
by CNPq and FAPERJ/Brazil.  A. P. was partially supported by CNPq/Brazil.

\end{document}